\documentclass[11pt]{article}

\usepackage[T1]{fontenc}
\usepackage[utf8]{inputenc}
\usepackage{lmodern}
\usepackage{amsmath,amssymb,amsthm,mathtools}
\usepackage{booktabs,longtable,array}
\usepackage[margin=0.9in]{geometry}
\usepackage{microtype}
\usepackage[hidelinks]{hyperref}

\usepackage{multicol}
\usepackage[bottom]{footmisc}
\hypersetup{
  pdftitle={Translational tiles without spectra in finite Abelian p-groups},
  pdfauthor={Shilei Fan}
}

\newtheorem{theorem}{Theorem}[section]
\newtheorem{proposition}[theorem]{Proposition}
\newtheorem{lemma}[theorem]{Lemma}

\newtheorem{remark}[theorem]{Remark}
\newtheorem{conjecture}[theorem]{Conjecture}
\theoremstyle{definition}
\newtheorem{definition}[theorem]{Definition}

\newcommand{\F}{\mathbb F}
\newcommand{\Z}{\mathbb Z}
\newcommand{\C}{\mathbb C}
\newcommand{\1}{\mathbf 1}
\newcommand{\ii}{\mathrm i}
\newcommand{\pp}[2]{\ensuremath{(#1;#2)}}

\providecommand{\keywords}[1]{\textbf{Keywords:} #1}
\providecommand{\subjclass}[1]{\textbf{MSC (2020):} #1}

\title{Translational tiles without spectra\\
in finite abelian $p$-groups}
\author{Shilei Fan\\
\small School of Mathematics and Statistics, Central China Normal University\\
\small Wuhan 430079, P.R. China\\
\small \texttt{slfan@mail.ccnu.edu.cn}\\
\and
Mamateli Kadir\\
\small School of Mathematics and Statistics, Kashi University\\
\small Kashi 844000, P.R. China\\
\small \texttt{mamatili880@163.com}}
\date{}

\begin{document}
\maketitle

\begin{abstract}
We construct explicit translational tiles without spectra in three finite
abelian $p$-groups.  The first is a $64$-point subset of
$\Z_4^4\times\Z_2^2$.  The other two are a $512$-point subset of
$\F_2^{13}$ and a $2187$-point subset of $\F_3^9$.  Consequently, the
tile-to-spectral implication fails for finite abelian $p$-groups, and it
already fails within the class of elementary abelian groups for both
$p=2$ and $p=3$.

Two elementary mechanisms organize the examples.  A two-layer obstruction
turns a spectral non-tile with two suitable tiling complements into a tile
without a spectrum.  A fiber--clique obstruction converts a family of
tiling complements with controlled common Fourier zeros into an elementary
abelian counterexample.  All coordinate data are included.  The finite
claims are certified by three short, self-contained programs using exact
integer arithmetic and exhaustive searches; the accompanying source files
recompute every assertion used in the proofs.
\end{abstract}

\keywords{Translational tiles, spectral sets, abelian $p$-groups, Fuglede's conjecture.}

\subjclass{20K01, 05B45, 42A85, 11B75.}

\section{Introduction}

Fuglede's conjecture connects translational tilings with orthogonal bases of
exponentials~\cite{Fuglede}.  Its analogue for a finite abelian group $G$
asks whether a subset of $G$ tiles by translations precisely when it admits
an orthogonal basis of characters.  The two implications are logically
different, and we denote them by T--S (tile implies spectral) and S--T
(spectral implies tile).

Fuglede formulated the so called spectral set conjecture on Euclidean space in 1974 and proved it when
either the spectrum or the tiling complement is a lattice~\cite{Fuglede}.
For almost three decades the known results were compatible with the
conjecture.  The finite-group viewpoint then became decisive: Tao found a
six-point spectral non-tile in $\F_3^5$ and transferred it to a counterexample
in $\mathbb R^5$~\cite{Tao}.  Soon afterward, finite constructions produced
counterexamples in both directions that lift to dimension three: a spectral
non-tile in $\Z_8^3$~\cite{KM-hadamard} and a tile without a spectrum in
$\Z_{24}^3$~\cite{FMM}.  Thus finite Fourier analysis became a central source
of examples for the Euclidean as well as the discrete problem.

After the general equivalence failed, attention shifted to restricted group
classes and to the two implications separately.  The classical
$\Z_{24}^3$ counterexample to T--S lives in a group that is not a $p$-group,
whereas
the $p$-group example in $\Z_8^3$ concerns S--T.  The latter direction also
fails in several elementary abelian groups; for every odd prime $p$ it fails
in $\F_p^4$~\cite{FergusonSothanaphan}.  In contrast, T--S was conjectured to
hold in every finite abelian $p$-group.  Malikiosis formulated the following
statement explicitly~\cite[Conjecture~1.3]{Malikiosis-LP}.

\begin{conjecture}\label{conj:pgroup}
Every translational tile in a finite abelian $p$-group is spectral.
\end{conjecture}

The conjecture was supported by many positive results.  It holds in cyclic
$p$-groups~\cite{Laba,Malikiosis-cyclic} and in
$\Z_p\times\Z_{p^n}$~\cite{Shi-ranktwo,Zhang-ranktwo}.  The full finite
Fuglede conjecture holds in $\F_p^2$~\cite{IMP}, while T--S holds in
$\F_p^3$ for every prime $p$; see~\cite{Malikiosis-LP,FMMV} and the
references therein.  Fan and Zhang developed a periodic-tiling method that
yields additional positive families~\cite{FanZhang}.  Their public preprint
also announces further elementary cases as forthcoming; we do not use those
announcements as proved results here.

Our main result disproves Conjecture~\ref{conj:pgroup}, first in a small
exponent-$4$ group and then in elementary abelian groups.

\begin{theorem}\label{thm:main}
There exist explicit translational tiles without spectra in each of the
following groups:
\begin{enumerate}
\item a $64$-point set $\Gamma\subset\Z_4^4\times\Z_2^2$;
\item a $512$-point set $R_2\subset\F_2^{13}$;
\item a $2187$-point set $R_3\subset\F_3^9$.
\end{enumerate}
The corresponding tiling complements have cardinalities $16$, $16$, and
$9$, respectively.
\end{theorem}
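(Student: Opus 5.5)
The plan follows the two mechanisms announced in the abstract: a two-layer obstruction for $\Gamma$ and a fiber--clique obstruction for $R_2$ and $R_3$. In each case the finite certificates are checked by exact computation.

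\textbf{The set $\Gamma$.} Let $G=\Z_4^4\times\Z_2^2$ and write $\widehat{G}$ for its dual group. I would start from a spectral non-tile $A$ living in a subgroup or quotient layer of $G$, in the spirit of the $\Z_8^3$ example of \cite{KM-hadamard} adapted to exponent $4$. Next I choose two sets $B_0,B_1$ in a complementary layer, each a tiling complement of a fixed block $C$; the point is that they are tiling complements with different Fourier zero sets. The candidate tile is then
\[
\Gamma=(A_0\times\{0\})\cup(A_1\times\{1\}),
\]
built as a union of two translated layers, so that $\Gamma$ tiles with a $16$-point complement $T$ assembled from the two complements.

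Tiling is the easy half: it is verified by checking $\1_\Gamma*\1_T=\1_G$ directly, or equivalently by checking that $Z(\widehat{\1_\Gamma})\cup Z(\widehat{\1_T})\supseteq\widehat G\setminus\{0\}$.

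For non-spectrality, suppose $\Lambda$ is a spectrum with $|\Lambda|=64$. Then $\Lambda-\Lambda\subseteq Z(\widehat{\1_\Gamma})\cup\{0\}$. I would project $\Lambda$ onto the characters distinguishing the two layers. The zero condition then forces each fiber of $\Lambda$ to be an orthogonal set for a single layer. Counting fiber sizes against $|\Lambda|$ should force some fiber to be a spectrum for $A$ together with a compatibility condition that the two different complements make impossible. The contradiction uses that $A$ is a non-tile whose spectra are rigid.

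\textbf{The sets $R_2$ and $R_3$.} In $\F_p^n$ I would take a family of tiling complements $T_1,\dots,T_k$ of a common tile. These are sets of size $16$ for $p=2$ and $9$ for $p=3$, chosen so that their common Fourier zeros are controlled. The tile $R$ is a union of fibers over a quotient $\F_p^n\to\F_p^m$. Each fiber is a coset of a subspace or a tile pattern indexed by one of the $T_i$, and this arrangement makes $R$ tile with complement of size $16$ or $9$.

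To rule out a spectrum, note that orthogonality means $\Lambda$ is a clique in the Cayley graph on $\widehat{\F_p^n}$ with connection set $Z(\widehat{\1_R})$. Here $Z(\widehat{\1_R})$ decomposes fiberwise as an intersection of the zero sets of the $T_i$ along each line through the quotient. The fiber--clique lemma bounds any clique by the product of the quotient-clique number and the maximal fiber clique. I would show this product is strictly less than $|R|=512$ (resp.\ $2187$), which is the exhaustive-search step.

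\textbf{Main obstacle.} The hard step is this clique bound, i.e.\ certifying that no $|R|$-element clique exists. A direct search over $2^{13}$ or $3^9$ characters is infeasible. The reduction via fibers and the symmetry of the zero set must therefore make the search small: a clique in the quotient of dimension about $4$, plus clique numbers of each fiber graph, each computed exactly. Designing the family $T_i$ so that these local clique numbers multiply to something below $|R|$, while $R$ still tiles, is where the real difficulty lies. If the first choice fails, I would tune the $T_i$ by search.
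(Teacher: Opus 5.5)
There is a genuine gap in the non-spectrality argument for $\Gamma$. The roles of the pieces are mixed up, and the contradiction is never produced. The mechanism that works is dual. Take a spectral non-tile $K\subset H=\Z_4^4\times\Z_2$ with spectrum $E$. Let the layers of $\Gamma=(T_0\times\{0\})\cup(T_1\times\{1\})$ be two tiling complements of the \emph{spectrum} $E$. Then $\Gamma$ tiles with the common complement $E\times\{0\}$, not with a complement ``assembled from the two complements''. A spectrum $\Lambda$ of $\Gamma$ then lives where $K$ lives. Two points of $\Lambda$ in the same $\Z_2$-fiber differ by some $(d,0)$, and $\widehat{\1_\Gamma}(d,0)=\widehat{\1_{T_0}}(d)+\widehat{\1_{T_1}}(d)$ involves both layers. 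So fibers are not ``orthogonal sets for a single layer''. What is needed is that this sum never vanishes for $0\ne d\in K-K$. ``Different zero sets'' does not give that; the paper certifies $|\widehat{\1_{T_0}}(d)|\ne|\widehat{\1_{T_1}}(d)|$ there. Then $(\Lambda-\Lambda)\cap((K-K)\times\{0\})=\{0\}$ and $|\Lambda||K|=2|H|$, so $\Lambda\oplus(K\times\{0\})=H\times\Z_2$ by the difference test. Restricting to one layer makes $K$ tile $H$, which is the contradiction. Thus a spectrum of $\Gamma$ is turned into a tiling complement of $K$, not a spectrum of it. Forcing a fiber to be a spectrum of your $A$ contradicts nothing, since $A$ is spectral. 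The ``rigidity'' and ``compatibility condition'' you appeal to are never identified.

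For $R_2,R_3$ your outline matches the fiber--clique idea, but the connection set is misdescribed in a way that invalidates the bound. Let $R=\bigcup_y C_y\times\{y\}$ with $A\oplus C_y=H$. Points of $\Lambda$ in one $\widehat L$-fiber differ by some $(\chi,0)$, and $\widehat{\1_R}(\chi,0)=\sum_y\widehat{\1_{C_y}}(\chi)=W(\chi)$ is a \emph{sum}. Its zero set contains $\bigcap_y Z(C_y)$ but may be strictly larger through cancellation. So a clique bound for the graph built on the intersection proves nothing about spectra of $R$. You must compute $\{W=0\}$ exactly or certify non-cancellation. The paper does the latter: it chooses block weights and verifies $W_2(v)=0\iff v\in D_2$ and $W_3(v)=0\iff v\in D_3$. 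No quotient clique number is needed. Since $|\Lambda|=q|L|$, pigeonhole gives a fiber with at least $q=|H|/|A|$ points forming a clique in $\mathcal G_D$ on $\widehat H$, which has only $2^8$ or $3^6$ vertices. Hence $\omega(\mathcal G_D)<q$ suffices; the paper gets $12<16$ and no $81$-clique. Note also that for $p=3$ the layers have $81$ points; it is the base tile $A_3$, hence the complement of $R_3$, that has $9$. Finally, the theorem asserts explicit sets. Its proof therefore rests on concrete data and exact certificates: the tilings, the non-tiling of $K$, the exact zero sets, and the clique searches. Your plan leaves all of these to an unspecified search.
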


The last two examples show that the failure is not caused merely by the
presence of elements of order $p^2$.  They give the dimension upper bounds
$d_2\leq13$ and $d_3\leq9$ for the first possible elementary abelian
counterexamples.  We do not claim that either dimension is minimal.

The first example is driven by a $16$-point spectral non-tile in
$\Z_4^4\times\Z_2$ and a short two-layer lemma.  The elementary examples
are instances of a second common construction: complements of one small
tile are placed in layers, and spectrality of the resulting large tile
would force a forbidden clique in an exact Fourier-zero Cayley graph.
These mechanisms are related to the layered finite-group constructions
in~\cite{KM-tiles,KM-hadamard}, but the two obstruction lemmas below isolate
the precise certificates needed here.

The paper is organized as follows.  Section~\ref{sec:prelim} recalls the
finite Fourier criteria.  Sections~\ref{sec:twolayer}
and~\ref{sec:fiber} isolate the two construction principles.  The three
examples are proved in Sections~\ref{sec:exp4},~\ref{sec:f2},
and~\ref{sec:f3}.  Section~\ref{sec:verification} explains the exact
certificates.  Complete coordinate data appear in the appendices.

\section{Finite Fourier preliminaries}\label{sec:prelim}

All groups below are finite, abelian, and written additively.  Let $G$ be
such a group and let $\widehat G$ denote its character group.

\begin{definition}
A set $A\subset G$ is a \emph{translational tile} if there is a set
$B\subset G$ such that every element of $G$ has a unique representation
$a+b$ with $a\in A$ and $b\in B$.  We then write $A\oplus B=G$ and call
$B$ a tiling complement of $A$.
\end{definition}

If $|A||B|=|G|$, the direct-sum condition is equivalent to
\begin{equation}\label{eq:difference-test}
 A\oplus B=G
 \quad\Longleftrightarrow\quad
 (A-A)\cap(B-B)=\{0\}.
\end{equation}

For $f:G\to\C$ we use the Fourier transform
\[
 \widehat f(\chi)=\sum_{x\in G}f(x)\overline{\chi(x)},
 \qquad \chi\in\widehat G.
\]
For $A\subset G$, write
\[
 Z(A)=\{\chi\in\widehat G\setminus\{0\}:
             \widehat{\1_A}(\chi)=0\}.
\]

\begin{definition}
A set $A\subset G$ is \emph{spectral} if there is a set
$\Lambda\subset\widehat G$ with $|\Lambda|=|A|$ such that the characters
in $\Lambda$, restricted to $A$, form an orthogonal basis of $\C^A$.
The set $\Lambda$ is then called a spectrum of $A$.
\end{definition}

Equivalently, $\Lambda$ is a spectrum of $A$ precisely when
$|\Lambda|=|A|$ and
\begin{equation}\label{eq:spectral-test}
 (\Lambda-\Lambda)\setminus\{0\}
 \subseteq
 \{\chi\in\widehat G:\widehat{\1_A}(\chi)=0\}.
\end{equation}
Here and below we use additive notation for character groups after fixing a
self-dual identification.  For $\F_p^n$ the identification is given by
$\chi_v(x)=\exp(2\pi\ii v\cdot x/p)$.

\section{A two-layer obstruction}\label{sec:twolayer}

The following lemma contains the conceptual part of the exponent-$4$
example.

\begin{lemma}\label{lem:two-layer}
Let $H$ be a finite abelian group.  Suppose $K\subset H$ has a spectrum
$E\subset\widehat H$ but does not tile $H$.  Suppose also that
\[
 E\oplus T_0=\widehat H,
 \qquad E\oplus T_1=\widehat H,
\]
and that for every $d\in(K-K)\setminus\{0\}$,
\begin{equation}\label{eq:magnitude-separation}
 \bigl|\widehat{\1_{T_0}}(d)\bigr|
 \ne
 \bigl|\widehat{\1_{T_1}}(d)\bigr|.
\end{equation}
Then
\[
 \Gamma=(T_0\times\{0\})\cup(T_1\times\{1\})
 \subset\widehat H\times\Z_2
\]
tiles $\widehat H\times\Z_2$ and is not spectral.
\end{lemma}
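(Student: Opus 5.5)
The plan is to prove the two assertions separately: the tiling comes directly from the two complement relations, and non-spectrality is reduced, via the separation hypothesis \eqref{eq:magnitude-separation}, to a tiling of $H$ by $K$, which is excluded by assumption.

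\emph{Tiling.} I would take the complement $E\times\{0\}\subset\widehat H\times\Z_2$. Any $(x,\varepsilon)\in\widehat H\times\Z_2$ lies in the layer $\varepsilon$. Since $E\oplus T_\varepsilon=\widehat H$, we can write $x=t+e$ uniquely with $t\in T_\varepsilon$ and $e\in E$. An element of $\Gamma+(E\times\{0\})$ has second coordinate $\varepsilon$ exactly when its $\Gamma$-summand comes from $T_\varepsilon\times\{\varepsilon\}$. Hence the representation of $(x,\varepsilon)$ is unique, and $\Gamma\oplus(E\times\{0\})=\widehat H\times\Z_2$. As a byproduct, $|T_0|=|T_1|=|\widehat H|/|E|=|H|/|K|$, using $|E|=|K|$, and so $|\Gamma|=2|H|/|K|$.

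\emph{Fourier transform of $\Gamma$.} I identify the dual of $\widehat H\times\Z_2$ with $H\times\Z_2$, with a character written $(h,\delta)$. Then
\[
 \widehat{\1_\Gamma}(h,\delta)=\widehat{\1_{T_0}}(h)+(-1)^{\delta}\,\widehat{\1_{T_1}}(h).
\]
Whenever this vanishes, $|\widehat{\1_{T_0}}(h)|=|\widehat{\1_{T_1}}(h)|$. By \eqref{eq:magnitude-separation} this is impossible for $h\in(K-K)\setminus\{0\}$. Therefore no zero of $\widehat{\1_\Gamma}$ has first coordinate in $(K-K)\setminus\{0\}$.

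\emph{Non-spectrality.} Suppose $\Lambda\subset H\times\Z_2$ were a spectrum of $\Gamma$, so $|\Lambda|=2|H|/|K|$ and \eqref{eq:spectral-test} holds. Let $P\subset H$ be the projection of $\Lambda$ onto the first coordinate. Each fibre of the projection has at most two points, so $|P|\ge|H|/|K|$. Take $p,p'\in P$ with $p-p'\in(K-K)\setminus\{0\}$, and choose lifts $\lambda,\lambda'\in\Lambda$. Then $\lambda-\lambda'\neq0$ is a zero of $\widehat{\1_\Gamma}$ whose first coordinate lies in $(K-K)\setminus\{0\}$, contradicting the previous step. Hence $(K-K)\cap(P-P)=\{0\}$. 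This makes the sums $k+p$ with $k\in K$, $p\in P$ pairwise distinct, which gives $|K||P|\le|H|$. Combined with $|P|\ge|H|/|K|$ we get $|K||P|=|H|$, and \eqref{eq:difference-test} then yields $K\oplus P=H$. This contradicts the assumption that $K$ does not tile, so $\Gamma$ is not spectral.

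The only delicate point is the step bounding $|P|$: a difference $\lambda-\lambda'$ with zero first coordinate is not controlled by the hypothesis. The fibre-size bound $\le2$ is exactly what absorbs these differences. Note that $(0,1)$ is indeed a zero of $\widehat{\1_\Gamma}$, because $|T_0|=|T_1|$, so this case genuinely occurs and must be handled through the fibres rather than excluded.
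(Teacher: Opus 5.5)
Your proof is correct, and the tiling half is the same as the paper's. For non-spectrality, the paper's argument is shorter and never leaves $H\times\Z_2$. It uses only the horizontal coefficients $\widehat{\1_\Gamma}(d,0)=\widehat{\1_{T_0}}(d)+\widehat{\1_{T_1}}(d)$, which are nonzero for $0\ne d\in K-K$. The spectral criterion then gives $(\Lambda-\Lambda)\cap\bigl((K-K)\times\{0\}\bigr)=\{0\}$. Together with $|K||\Lambda|=2|H|$, the difference test yields $\Lambda\oplus(K\times\{0\})=H\times\Z_2$, and restricting to either layer gives a complement of $K$ in $H$.

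Because the comparison set $K\times\{0\}$ is horizontal, differences of $\Lambda$ with nonzero second coordinate never enter. This includes the zero at $(0,1)$ that you flag as delicate, so no fibre count is needed. Your projection onto $P$ instead forces you to control mixed differences $(p,0)-(p',1)$. You therefore also need the coefficients $\widehat{\1_{T_0}}(d)-\widehat{\1_{T_1}}(d)$, and that is exactly where you use the full magnitude separation. The paper's argument only needs $\widehat{\1_{T_0}}(d)\ne-\widehat{\1_{T_1}}(d)$ for $0\ne d\in K-K$, so it proves the lemma under a weaker hypothesis.

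In exchange, your route gives a slightly stronger intermediate conclusion. The fibre bound is forced to be an equality, so a hypothetical spectrum would have to be of the form $P\times\Z_2$ with $K\oplus P=H$. The paper's argument only gives $\Lambda_0\oplus K=\Lambda_1\oplus K=H$ for the two layers separately.
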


\begin{proof}
The two direct-sum identities give
\[
 \Gamma\oplus(E\times\{0\})=\widehat H\times\Z_2,
\]
so $\Gamma$ tiles.  For $d\in H$,
\[
 \widehat{\1_\Gamma}(d,0)
 =\widehat{\1_{T_0}}(d)+\widehat{\1_{T_1}}(d).
\]
Condition~\eqref{eq:magnitude-separation} implies that this coefficient is
nonzero whenever $0\ne d\in K-K$.

Suppose $\Lambda\subset H\times\Z_2$ were a spectrum of $\Gamma$.
The spectral criterion gives
\[
 ((\Lambda-\Lambda)\setminus\{0\})
 \cap ((K-K)\times\{0\})=\varnothing.
\]
Since $|E|=|K|$ and $|T_j|=|H|/|E|$, we have
$|K||\Lambda|=2|H|$.  The difference test therefore yields
\[
 \Lambda\oplus(K\times\{0\})=H\times\Z_2.
\]
Restricting this tiling to either $\Z_2$-layer would make $K$ tile $H$,
contrary to the hypothesis.
\end{proof}

\section{The fiber--clique obstruction}\label{sec:fiber}

The next lemma packages both elementary abelian constructions.

\begin{lemma}\label{lem:fiber-clique}
Let $H$ and $L$ be finite abelian groups.  Let $A\subset H$, and for each
$y\in L$ let $C_y\subset H$ satisfy
\[
 A\oplus C_y=H.
\]
Put
\[
 R=\bigcup_{y\in L}(C_y\times\{y\})\subset H\times L,
 \qquad q=|C_y|=\frac{|H|}{|A|}.
\]
For a nontrivial $\chi\in\widehat H$, define
\[
 W(\chi)=\sum_{y\in L}\widehat{\1_{C_y}}(\chi),
 \qquad
 D=\{\chi\ne0:W(\chi)=0\}.
\]
Let $\mathcal G_D$ be the Cayley graph on $\widehat H$ in which distinct
$u,v$ are adjacent when $u-v\in D$.  If $\mathcal G_D$ has no clique of
size $q$, then $R$ tiles $H\times L$ but is not spectral.
\end{lemma}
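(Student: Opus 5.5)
The tiling claim is the easy half. I would show $R\oplus(A\times\{0\})=H\times L$ directly. Fix $(h,y)\in H\times L$. A representation $(h,y)=(c,y')+(a,0)$ with $(c,y')\in R$ and $a\in A$ forces $y'=y$ and $c\in C_y$. Since $A\oplus C_y=H$, the element $h$ decomposes uniquely as $c+a$ with $c\in C_y$ and $a\in A$. So the representation exists and is unique.

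For non-spectrality, suppose $\Lambda\subset\widehat H\times\widehat L$ were a spectrum of $R$. Then $|\Lambda|=|R|=q|L|$. The key computation is the Fourier transform of $\1_R$ at characters of the form $(\chi,0)$, that is, characters trivial on the $L$-factor. Summing over layers gives
\[
 \widehat{\1_R}(\chi,0)=\sum_{y\in L}\widehat{\1_{C_y}}(\chi)=W(\chi).
\]
Hence for $\chi\neq0$, the character $(\chi,0)$ is a zero of $\widehat{\1_R}$ exactly when $\chi\in D$.

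Next I would partition $\Lambda$ into fibers over $\widehat L$: for $\psi\in\widehat L$, put $\Lambda_\psi=\{\chi:(\chi,\psi)\in\Lambda\}$. Two distinct elements of the same fiber differ by $(\chi-\chi',0)$. By the spectral criterion \eqref{eq:spectral-test}, $\widehat{\1_R}$ must vanish there, so $\chi-\chi'\in D$. Thus each fiber $\Lambda_\psi$ is a clique in $\mathcal G_D$, and by hypothesis $|\Lambda_\psi|\le q-1$. Since $|\widehat L|=|L|$, summing over fibers gives $|\Lambda|\le (q-1)|L|<q|L|$, which is a contradiction.

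The only delicate point is bookkeeping. We need the identification $\widehat{H\times L}=\widehat H\times\widehat L$. We also need to check that the character of $H\times L$ restricting to $\chi$ on $H$ and trivial on $L$ has transform exactly $W(\chi)$, with no twisting by $y$; this is what makes the clique bound apply fiberwise. The cases $q\le 1$ are degenerate: then $\mathcal G_D$ trivially has a clique of size $q$, so the hypothesis excludes them.
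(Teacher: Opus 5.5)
Your proposal is correct and follows the paper's proof essentially step for step. You tile layerwise with complement $A\times\{0\}$, compute the horizontal-frequency identity $\widehat{\1_R}(\chi,0)=W(\chi)$, and run a fiber argument over the $\widehat L$-coordinate of $\Lambda$. You phrase that last step as bounding every fiber by $q-1$ and summing, while the paper uses pigeonhole to find one fiber of size at least $q$; these are the same argument.
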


\begin{proof}
Layer by layer,
\[
 (A\times\{0\})\oplus R=H\times L,
\]
so $R$ tiles.  Moreover, at a horizontal frequency $(\chi,0)$,
\begin{equation}\label{eq:horizontal-fourier}
 \widehat{\1_R}(\chi,0)
 =\sum_{y\in L}\widehat{\1_{C_y}}(\chi)=W(\chi).
\end{equation}

If $\Lambda\subset\widehat H\times\widehat L$ were a spectrum of $R$,
then
\[
 |\Lambda|=|R|=q|L|.
\]
Partition $\Lambda$ according to its second coordinate.  One of the $|L|$
fibers contains at least $q$ elements.  The first coordinates of any $q$
such elements are distinct, and their pairwise differences lie in $D$ by
\eqref{eq:spectral-test} and~\eqref{eq:horizontal-fourier}.  They form a
$q$-clique in $\mathcal G_D$, a contradiction.
\end{proof}

\begin{remark}\label{rem:types}
In the examples only finitely many distinct sets occur among the $C_y$.
If $C_j$ occurs on a block $Y_j\subset L$ of cardinality $a_j$, then
\[
 W(\chi)=\sum_j a_j\widehat{\1_{C_j}}(\chi).
\]
Thus the certificate consists of common tilings, an exact weighted Fourier
zero set, and a clique obstruction in one Cayley graph.
\end{remark}

\section{A \texorpdfstring{$64$}{64}-point example in an
\texorpdfstring{exponent-$4$}{exponent-4} group}\label{sec:exp4}

Let
\[
 H=\Z_4^4\times\Z_2
\]
and identify $H$ with its dual through
\begin{equation}\label{eq:pairing-z4}
 \langle x,y\rangle
 =x_1y_1+x_2y_2+x_3y_3+x_4y_4+2x_5y_5\pmod4,
 \qquad \chi_y(x)=\ii^{\langle x,y\rangle}.
\end{equation}
The notation $\pp{abcd}{e}$ abbreviates $(a,b,c,d;e)$.

Appendix~\ref{app:exp4-data} gives sets
\[
 K,E,T_0,T_1\subset H,
 \qquad |K|=|E|=16,\qquad |T_0|=|T_1|=32.
\]

\begin{proposition}\label{prop:exp4-cert}
The sets in Appendix~\ref{app:exp4-data} satisfy the following properties.
\begin{enumerate}
\item $E$ is a spectrum of $K$;
\item $E\oplus T_0=H$ and $E\oplus T_1=H$;
\item $K$ does not tile $H$;
\item for every $0\ne d\in K-K$,
\[
 |\widehat{\1_{T_0}}(d)|^2\ne
 |\widehat{\1_{T_1}}(d)|^2.
\]
\end{enumerate}
\end{proposition}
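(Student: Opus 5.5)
The proposition is a finite certificate about explicit sets in a group of order $|H|=4^4\cdot2=512$. I would verify its four items separately, using exact arithmetic in $\Z[\ii]$ throughout. Every character value $\chi_y(x)=\ii^{\langle x,y\rangle}$ lies in $\{1,\ii,-1,-\ii\}$, so every Fourier coefficient $\widehat{\1_S}(d)$ is a Gaussian integer. Testing whether it vanishes, or comparing squared moduli, is then an exact integer computation, with no floating point involved.

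For item 1, $|K|=|E|=16$, so by \eqref{eq:spectral-test} it suffices to compute $E-E$ and check that $\widehat{\1_K}(e)=0$ for each of the at most $240$ nonzero differences $e$. Each check is a sum of $16$ powers of $\ii$, and the test is whether its real and imaginary parts both vanish. Item 2 follows from \eqref{eq:difference-test}: since $|E||T_j|=16\cdot32=512=|H|$, it is enough to check $(E-E)\cap(T_j-T_j)=\{0\}$ for $j=0,1$. Item 4 is similar. For each nonzero $d\in K-K$ (at most $240$ elements), I would compute the two Gaussian integers $\widehat{\1_{T_0}}(d)$ and $\widehat{\1_{T_1}}(d)$ and compare their norms $a^2+b^2$ as integers.

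Item 3, that $K$ does not tile $H$, is the real obstacle, because it is a nonexistence statement over all $32$-element complements. My first move is to find a Fourier obstruction. If $K\oplus B=H$ with $|B|=32$, then $\widehat{\1_K}\cdot\widehat{\1_B}=0$ away from the trivial character, so $Z(K)\cup Z(B)\supseteq\widehat H\setminus\{0\}$. In addition, $B$ is a union of cosets of nothing in particular, but it must avoid $K-K$ in its difference set.

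A cheap route is to look for a subgroup $M\le H$ such that the projection of $K$ to $H/M$ cannot be part of a multiset tiling, because $|H/M|$ is not divisible appropriately or because the projected counts are uneven. Concretely, for each character $\chi$ of order $2$ or $4$ I would compute the distribution of $K$ over the cosets of $\ker\chi$. Any tiling $K\oplus B=H$ forces $\1_K*\1_B$ to be constant on these quotients. Equivalently, for every $\chi\notin Z(K)$ the whole cyclic subgroup it generates must satisfy $\langle\chi\rangle\setminus\{0\}\subseteq Z(K)\cup Z(B)$, and $B$ has to be large enough to vanish there.

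If no such quotient argument closes the case, I fall back to an exhaustive search. Translate so that $0\in B$, then choose complement elements greedily: repeatedly take the least element of $H$ not yet covered by $K+B$, and branch over the $16$ translates $x-k$, $k\in K$, that would cover it while keeping $K+B$ disjoint. This is the standard exact-cover backtracking. Symmetries of $K$, such as its stabilizing automorphisms, cut the branching further. With only $32$ layers and $16$ choices per layer, plus heavy pruning from disjointness, I expect the search to terminate quickly with no solution. That completes item 3, and with it the hypotheses of Lemma~\ref{lem:two-layer}.
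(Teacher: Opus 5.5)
Your plan is correct and is essentially the paper's proof. Items 1, 2 and 4 are the same exact Gaussian-integer and difference checks: your vanishing test for $\widehat{\1_K}$ on $(E-E)\setminus\{0\}$ is equivalent to the paper's Gram-matrix check against $16I_{16}$, and your difference test is equivalent to its enumeration of all sums in $E+T_j$. Item 3 is settled exactly as in the paper, by the exact-cover search after normalizing $0\in B$ (the paper's verifier finishes in $42$ nodes). You can drop the speculative quotient paragraph: its ``equivalent'' Fourier condition is vacuous, since any tiling already gives $Z(K)\cup Z(B)\supseteq\widehat H\setminus\{0\}$, and none of it is needed.
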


\begin{proof}
All assertions are exact finite statements.  For (1), the verifier forms
the $16\times16$ matrix
${(\ii^{\langle k,e\rangle})}_{k\in K,e\in E}$ and checks that its Gram
matrix is $16I_{16}$.  For (2), it checks all sums and their uniqueness in
$H$.  For (3), it normalizes a hypothetical complement to contain $0$ and
exhausts the resulting exact-cover tree.  For (4), all Fourier coefficients
are Gaussian integers; their squared norms are compared exactly for every
nonzero element of $K-K$.  The complete implementation is the ancillary
file \path{verify_finite_2group_tile_nonspectral.py}.
Section~\ref{sec:verification} records its output and explains the audit.
\end{proof}

\begin{proof}[Proof of Theorem~\ref{thm:main}, part~(1)]
Apply Lemma~\ref{lem:two-layer} to Proposition~\ref{prop:exp4-cert}.  The
resulting set
\[
 \Gamma=(T_0\times\{0\})\cup(T_1\times\{1\})
 \subset H\times\Z_2\cong\Z_4^4\times\Z_2^2
\]
has $64$ points, tiles with complement $E\times\{0\}$, and is not
spectral.
\end{proof}

\section{An elementary binary example}\label{sec:f2}

Identify $\F_2^d$ with the integers $0,\ldots,2^d-1$ by writing
$x=\sum_{i=0}^{d-1}x_i2^i$ for $(x_0,\ldots,x_{d-1})$.  Addition of codes
means coordinatewise addition modulo two.

Let $H_2=\F_2^8$.  Appendix~\ref{app:f2-data} gives a $16$-point set
$A_2\subset H_2$, seventeen $16$-point sets $C_0,\ldots,C_{16}$, and
positive weights
\begin{equation}\label{eq:f2-weights}
 (a_0,\ldots,a_{16})=(1,1,2,1,1,1,3,1,1,2,3,3,1,5,3,2,1),
 \qquad \sum_j a_j=32.
\end{equation}
Partition $L_2=\F_2^5$, in increasing integer-code order, into consecutive
blocks $Y_j$ with $|Y_j|=a_j$, and define
\begin{equation}\label{eq:R2}
 R_2=\bigcup_{j=0}^{16}(C_j\times Y_j)
 \subset\F_2^8\times\F_2^5=\F_2^{13}.
\end{equation}

For $v\in H_2$, put
\[
 W_2(v)=\sum_{j=0}^{16}a_j\widehat{\1_{C_j}}(v),
 \qquad
 D_2=\bigcap_{j=0}^{16}Z(C_j).
\]

\begin{proposition}\label{prop:f2-cert}
The data in Appendix~\ref{app:f2-data} satisfy:
\begin{enumerate}
\item $A_2\oplus C_j=H_2$ for $0\le j\le16$;
\item $|D_2|=108$, and for every nonzero $v\in H_2$,
\[
 W_2(v)=0\quad\Longleftrightarrow\quad v\in D_2;
\]
\item the Cayley graph $\mathcal G_{D_2}$ has clique number $12$;
\item the minimum of $|W_2(v)|$ over
$v\notin D_2\cup\{0\}$ is $4$.
\end{enumerate}
\end{proposition}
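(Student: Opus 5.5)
This proposition, like Proposition~\ref{prop:exp4-cert}, is a finite certificate, so the plan is to reduce each item to an exact computation over $\Z$ and to organize the searches so that they are small and auditable.

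For (1), I will use the difference test~\eqref{eq:difference-test}. Since $|A_2||C_j|=256=|H_2|$, it suffices to check $(A_2-A_2)\cap(C_j-C_j)=\{0\}$ for each $j$. In characteristic two, differences are XORs of integer codes. Equivalently, one can form the $256$ sums $a\oplus c$ and check that they are pairwise distinct.

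For (2), note that over $\F_2^8$ every character takes values $\pm1$. Hence
\[
\widehat{\1_{C_j}}(v)=\sum_{c\in C_j}(-1)^{v\cdot c}
\]
is an integer, and so is $W_2(v)$. I will compute the $17\times256$ integer table of these coefficients exactly. From it I read off $D_2$ as the set of nonzero $v$ at which every column vanishes, and check $|D_2|=108$. Since the weights $a_j$ are positive integers, the inclusion $D_2\subseteq\{W_2=0\}$ is immediate. The converse is checked by listing all nonzero $v$ with $W_2(v)=0$ and comparing the two sets. Item (4) comes from the same table: take the minimum of $|W_2(v)|$ over $v\notin D_2\cup\{0\}$ and confirm that it equals $4$. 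This gives a quantitative margin, although it is not logically needed later.

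The main obstacle is (3). The clique number has two parts.
\begin{enumerate}
\item The lower bound $12$ only needs an explicit $12$-element clique, which can be recorded and checked pairwise.
\item The upper bound requires showing that there is no $13$-clique in a Cayley graph on $256$ vertices with connection set of size $108$.
\end{enumerate}
For the upper bound I will use vertex-transitivity: translate so that the clique contains $0$. Every other clique vertex then lies in $D_2$, so the problem becomes the clique number of the graph induced on $D_2$ (its $108$ vertices, edges $u-v\in D_2$) being at most $11$. I will run an exact branch-and-bound maximum-clique search on this $108$-vertex graph, using greedy-coloring bounds in the style of Tomita or Östergård. Symmetry breaking by the stabilizer of $D_2$ in $GL_8(\F_2)$ can be added if needed.

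The point of the proposition is $\omega(\mathcal G_{D_2})=12<16=q$. Together with (1) and (2), this means the hypotheses of Lemma~\ref{lem:fiber-clique} hold via Remark~\ref{rem:types}. I expect this exhaustive no-$13$-clique search to be the step needing the most care: it must be exact and independently reproducible. I would certify it by logging the search-tree size, or by recording a coloring-based bound at each branch.
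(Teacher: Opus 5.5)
Your plan is correct and is essentially the paper's proof. You propose exact integer checks of all tiling sums and Walsh coefficients for (1), (2) and (4), and for (3) an explicit $12$-clique plus an exhaustive coloring-bounded branch-and-bound ruling out a $13$-clique. The only difference is that you first use vertex-transitivity to reduce the search to the $108$-vertex graph induced on $D_2$, whereas the paper searches all $256$ vertices ($425$ nodes) and cross-checks the result with an independent MILP; your reduction is valid and simply shrinks the search.
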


\begin{proof}
The ancillary C++ verifier checks every sum in (1), evaluates all Walsh
Fourier coefficients in integer arithmetic, and obtains (2) and (4).
For (3) it uses an exact coloring branch-and-bound algorithm on $256$
vertices.  It exhibits a $12$-clique and exhaustively rules out a
$13$-clique in $425$ search nodes.  An independent binary MILP formulation
returns the same optimum.
\end{proof}

\begin{proof}[Proof of Theorem~\ref{thm:main}, part~(2)]
Apply Lemma~\ref{lem:fiber-clique} with $H=H_2$, $L=L_2$, $A=A_2$, and
$C_y=C_j$ for $y\in Y_j$.  Here $q=16$, whereas
$\omega(\mathcal G_{D_2})=12$.  Hence $R_2$ tiles $\F_2^{13}$ with
complement $A_2\times\{0\}$ and is not spectral.  Its cardinality is
$16\cdot32=512$.
\end{proof}

\section{An elementary ternary example}\label{sec:f3}

Identify $\F_3^d$ with the integers $0,\ldots,3^d-1$ by writing
$x=\sum_{i=0}^{d-1}x_i3^i$ for $(x_0,\ldots,x_{d-1})$.  Addition of codes
means coordinatewise addition modulo three.

Let $H_3=\F_3^6$.  Appendix~\ref{app:f3-data} gives a $9$-point set
$A_3\subset H_3$, fifteen $81$-point sets $C_0,\ldots,C_{14}$, and
positive weights
\begin{equation}\label{eq:f3-weights}
 (a_0,\ldots,a_{14})=(2,2,4,4,1,1,1,2,1,1,1,3,2,1,1),
 \qquad \sum_j a_j=27.
\end{equation}
Partition $L_3=\F_3^3$, in increasing integer-code order, into consecutive
blocks $Y_j$ with $|Y_j|=a_j$, and define
\begin{equation}\label{eq:R3}
 R_3=\bigcup_{j=0}^{14}(C_j\times Y_j)
 \subset\F_3^6\times\F_3^3=\F_3^9.
\end{equation}

Let $\zeta=e^{2\pi\ii/3}$ and, for $v\in H_3$, put
\[
 W_3(v)=\sum_{j=0}^{14}a_j\widehat{\1_{C_j}}(v),
 \qquad
 D_3=\bigcap_{j=0}^{14}Z(C_j).
\]

\begin{proposition}\label{prop:f3-cert}
The data in Appendix~\ref{app:f3-data} satisfy:
\begin{enumerate}
\item $A_3\oplus C_j=H_3$ for $0\le j\le14$;
\item $|D_3|=582$, and for every nonzero $v\in H_3$,
\[
 W_3(v)=0\quad\Longleftrightarrow\quad v\in D_3;
\]
\item the Cayley graph $\mathcal G_{D_3}$ has no clique of size $81$;
\item the minimum nonzero Eisenstein norm $|W_3(v)|^2$ over
$v\notin D_3\cup\{0\}$ is $2187$.
\end{enumerate}
\end{proposition}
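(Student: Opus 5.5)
The plan is the same as for Proposition~\ref{prop:f2-cert}: each item is an exact finite statement about explicit data, so the proof is a certified computation. The real work is organizing it so every step uses exact arithmetic and the search in (3) is small enough to be exhaustive.

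\textbf{Item (1).} Here $|A_3|\cdot|C_j|=9\cdot 81=729=|H_3|$. By the difference test~\eqref{eq:difference-test}, it therefore suffices to check, for each $j$, that the $729$ sums $a+c$ with $a\in A_3$, $c\in C_j$ are pairwise distinct. Equivalently, one checks that $(A_3-A_3)\cap(C_j-C_j)=\{0\}$. This is a direct loop over integer codes with coordinatewise addition modulo $3$.

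\textbf{Items (2) and (4).} I will work in $\Z[\zeta]$ with the basis $\{1,\zeta\}$, using $\zeta^2=-1-\zeta$. For each $v\in H_3$, compute $\widehat{\1_{C_j}}(v)=\sum_{x\in C_j}\zeta^{-v\cdot x}$ by counting the residues of $v\cdot x$ modulo $3$. If the counts are $n_0,n_1,n_2$, the coefficient is $n_0+n_1\zeta^{-1}+n_2\zeta^{-2}$. It vanishes exactly when $n_0=n_1=n_2$.

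This gives $Z(C_j)$ exactly, and hence $D_3$, whose size should come out as $582$. Next form $W_3(v)=\sum_j a_j\widehat{\1_{C_j}}(v)$ as a pair of integers $(\alpha,\beta)$ representing $\alpha+\beta\zeta$. Check that $W_3(v)=0$ exactly on $D_3$. On the complement of $D_3\cup\{0\}$, record the minimum of the norm $\alpha^2-\alpha\beta+\beta^2$, expected to be $2187$.

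The implication ``$v\in D_3\Rightarrow W_3(v)=0$'' is trivial. Only the converse needs the computation.

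\textbf{Item (3).} This is the main obstacle. $\mathcal G_{D_3}$ is a Cayley graph on $729$ vertices with connection set $D_3$ of size $582$. So it is very dense, and a naive maximum-clique search for $81$-cliques is hopeless.

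I would first exploit structure. $D_3$ is symmetric ($-D_3=D_3$), and it is a union of punctured lines: $\widehat{\1_C}(v)=0$ iff $\widehat{\1_C}(2v)=0$, since the counts merely permute. So $D_3$ is a union of $291$ projective points $\{\pm v\}$.

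An $81$-clique $\Lambda$ containing $0$ would satisfy $|\Lambda|\cdot|X|=729$ with $(\Lambda-\Lambda)\cap(X-X)=\{0\}$ for suitable $X$. I would therefore try to rule it out by a weight-function or Delsarte-type linear programming bound. Concretely, maximize $|S|$ over cliques $S$ using the LP
\[
 \max \sum_x f(x) \quad\text{subject to}\quad f(0)=1,\ f\ge0,\ f=0 \text{ off } D_3\cup\{0\},\ \widehat f\ge0 .
\]
If its optimum is strictly less than $81$, the claim follows.

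If the LP bound is too weak, I fall back on an exact coloring-based branch and bound, exactly as in Proposition~\ref{prop:f2-cert}. Three reductions make this feasible:
\begin{enumerate}
\item normalize $0\in\Lambda$ by translation;
\item quotient by the linear automorphisms of $H_3$ that preserve $D_3$;
\item split by hyperplane fibers: an $81$-clique meets some coset of a $5$-dimensional subspace in at least $27$ points, which gives a smaller subproblem to exhaust.
\end{enumerate}
As an independent cross-check I would use a MILP formulation. I expect the decisive difficulty to be making this exhaustive search terminate with a certificate.
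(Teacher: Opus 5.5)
Your plan matches the paper's proof: a direct check of the $15\cdot729$ tiling sums for (1), exact integer-pair arithmetic in $\Z[\zeta]$ for the Fourier coefficients in (2) and (4), and an exact coloring-based branch-and-bound for (3), which the paper reports terminating after $76{,}620$ nodes. The Delsarte LP and the symmetry and hyperplane-fiber reductions are optional extras that the paper does not need, and your aside that an $81$-clique would tile with some $X$ is unjustified (it presupposes a $9$-point independent set in $\mathcal G_{D_3}$), but it plays no role in the argument.
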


\begin{proof}
For a Fourier coefficient let
$n_r=|\{x\in C_j:v\cdot x=r\}|$.  The verifier represents
$n_0+n_1\zeta+n_2\zeta^2$ by the integer pair
$(n_0-n_2,n_1-n_2)$, so no floating-point arithmetic is used.  It checks
all $15\cdot729$ tiling sums, evaluates (2) and (4), and applies an exact
coloring branch-and-bound search to (3).  The no-$81$-clique search has
$76{,}620$ nodes.
\end{proof}

\begin{proof}[Proof of Theorem~\ref{thm:main}, part~(3)]
Apply Lemma~\ref{lem:fiber-clique} with $H=H_3$, $L=L_3$, $A=A_3$, and
$C_y=C_j$ for $y\in Y_j$.  Here $q=81$, and
$\mathcal G_{D_3}$ has no $81$-clique.  Hence $R_3$ tiles $\F_3^9$ with
complement $A_3\times\{0\}$ and is not spectral.  Its cardinality is
$81\cdot27=2187$.
\end{proof}

\section{Exact verification and reproducibility}\label{sec:verification}

The theoretical reductions in Lemmas~\ref{lem:two-layer}
and~\ref{lem:fiber-clique} are independent of computation.  The three
propositions are finite certificates attached to explicit integer data.
They are verified by the following ancillary source files:
\begin{center}
\begin{tabular}{@{}ll@{}}
\toprule
example & verifier\\
\midrule
$\Z_4^4\times\Z_2^2$ &
\path{verify_finite_2group_tile_nonspectral.py}\\
$\F_2^{13}$ &
\path{verify_elementary_f2_tile_nonspectral.cpp}\\
$\F_3^9$ &
\path{verify_elementary_f3_tile_nonspectral.cpp}\\
\bottomrule
\end{tabular}
\end{center}
The programs use only exact integer arithmetic.  The clique searches are
finite branch-and-bound enumerations with valid greedy-coloring upper
bounds.  The non-tiling search in the first example is an exhaustive exact
cover search after translation normalization.

On the coordinate data printed in the appendices, the three programs return
the following decisive output:
\begin{verbatim}
VERIFIED H=C4^4xC2 order=512
K_spectral=1 K_tile=0 exact_cover_nodes=42
E_plus_T0=H E_plus_T1=H
Fourier_magnitude_separated=1 oriented_differences=152
Gamma_tile=1 Gamma_spectral=0 ambient=C4^4xC2^2 size=64

VERIFIED_ELEMENTARY_TILE_NONSPECTRAL
group=F2^13 group_order=8192 tile_size=512 tiling_complement_size=16
common_fourier_zeros=108
common_zero_clique_number=12 no_16_clique=1
weighted_noncancellation=1 minimum_abs_weighted_fourier=4

VERIFIED_F3_9_TILE_NONSPECTRAL tile_size=2187 complement_size=9
common_zero=582 no_81_clique_nodes=76620
minimum_nonzero_eisenstein_norm=2187
\end{verbatim}

For reproduction, run
\begin{verbatim}
python3 verify_finite_2group_tile_nonspectral.py
clang++ -O3 -std=c++20 verify_elementary_f2_tile_nonspectral.cpp \
  -o verify_f2 && ./verify_f2
clang++ -O3 -std=c++20 verify_elementary_f3_tile_nonspectral.cpp \
  -o verify_f3 && ./verify_f3
\end{verbatim}
The first program is deliberately short and works directly in the five
displayed coordinates.  The other two recompute their Fourier zero sets
from the printed complement lists before starting the graph searches.

\medskip
\noindent\textbf{AI-use disclosure.}
The search for the examples in this paper was carried out with the
assistance of OpenAI's GPT-5.6.  The model helped formulate and implement
the exploratory searches that produced the candidates.  All mathematical
claims were subsequently checked by the authors and are independently
certified by the exact programs described above.

\textbf{Fundings:} S. L. FAN  is  partially supported by NSFC (grants No. 12331004  and No.  12231013).
M. Kadir is supported by NSF of China (Grant No. 12361015), and by NSF of Xinjiang Uygur Autonomous Region,
P. R. China (Grant No. 2025D01A09).

\section{Concluding remarks}

The examples answer the $p$-group T--S conjecture negatively and show that
the obstruction persists in elementary abelian groups.  The constructions
do not determine the least possible ambient orders or dimensions.  In
particular, the numbers $13$ and $9$ arise from the present base-plus-layer
certificates rather than from lower-bound theorems.

The fiber--clique lemma suggests two routes to smaller examples.  One may
reduce the dimension of the base group while retaining enough tiling
complements, or reduce the number of layer types while forcing the weighted
horizontal Fourier zero graph to have clique number below the complement
size.  Both are finite but structurally constrained search problems.

\appendix

\section{Coordinate data for the
\texorpdfstring{exponent-$4$}{exponent-4} example}\label{app:exp4-data}

In $H=\Z_4^4\times\Z_2$, define
\begin{align*}
K={}&\{\pp{0000}{0},\pp{0200}{0},\pp{0000}{1},\pp{1000}{0},
\pp{3202}{0},\pp{0100}{0},\pp{0010}{0},\pp{2112}{1},\\
&\hspace{1.4em}\pp{2122}{0},\pp{0220}{1},\pp{0001}{0},\pp{3001}{0},
\pp{1003}{0},\pp{2203}{0},\pp{0030}{0},\pp{2132}{1}\},\\
E={}&\{\pp{0000}{0},\pp{0012}{0},\pp{0221}{0},\pp{2231}{0},
\pp{0203}{1},\pp{2213}{1},\pp{2020}{1},\pp{2032}{1},\\
&\hspace{1.4em}\pp{1133}{0},\pp{2303}{0},\pp{2311}{0},\pp{3123}{0},
\pp{0323}{1},\pp{0331}{1},\pp{1111}{1},\pp{3101}{1}\},\\
T_0=\{&\pp{0000}{0},\pp{2002}{0},\pp{2301}{1},\pp{0303}{1},
\pp{1301}{0},\pp{3303}{0},\pp{1320}{0},\pp{3322}{0},\\
&\pp{0300}{0},\pp{2302}{0},\pp{0021}{0},\pp{2023}{0},
\pp{1023}{1},\pp{3021}{1},\pp{0023}{0},\pp{2021}{0},\\
&\pp{3323}{1},\pp{1321}{1},\pp{2321}{0},\pp{0323}{0},
\pp{0002}{0},\pp{2000}{0},\pp{0302}{0},\pp{2300}{0},\\
&\pp{1003}{0},\pp{3001}{0},\pp{1022}{0},\pp{3020}{0},
\pp{3302}{1},\pp{1300}{1},\pp{3022}{0},\pp{1020}{0}\},\\[1mm]
T_1=\{&\pp{0000}{0},\pp{3100}{0},\pp{0301}{0},\pp{1223}{1},
\pp{1322}{1},\pp{0210}{0},\pp{0230}{0},\pp{1013}{1},\\
&\pp{1033}{1},\pp{0200}{1},\pp{2123}{1},\pp{1023}{0},
\pp{3121}{1},\pp{1123}{1},\pp{1301}{0},\pp{3303}{0},\\
&\pp{2320}{1},\pp{0322}{1},\pp{0100}{0},\pp{2102}{0},
\pp{1213}{1},\pp{1223}{0},\pp{1233}{1},\pp{3003}{0},\\
&\pp{1203}{0},\pp{1001}{0},\pp{0222}{1},\pp{2220}{1},
\pp{0000}{1},\pp{2002}{1},\pp{0002}{0},\pp{2000}{0}\}.
\end{align*}

\section{Coordinate data for the binary example}\label{app:f2-data}

The base tile is
\[
 A_2=\{3,22,35,53,69,74,96,101,136,144,168,179,199,204,230,231\}
 \subset\F_2^8.
\]
The complements are:
\begingroup
\scriptsize
\setlength\LTleft{0pt}
\setlength\LTright{0pt}
\begin{longtable}{@{}c>{\raggedright\arraybackslash}p{0.88\textwidth}@{}}
\toprule
$j$ & $C_j$ (binary integer codes)\\
\midrule
\endhead
0&0,9,36,45,48,57,72,97,150,159,199,215,222,238,243,250\\
1&0,9,36,45,48,57,72,81,88,97,117,124,150,159,199,238\\
2&0,9,36,45,48,51,57,58,72,81,88,97,199,208,217,238\\
3&0,9,36,45,48,51,57,58,72,97,114,123,199,238,243,250\\
4&0,9,36,45,48,57,72,97,181,188,199,208,215,217,222,238\\
5&0,9,19,36,45,58,72,97,156,181,199,208,217,238,244,253\\
6&0,9,19,26,36,45,114,123,133,140,178,181,187,188,199,206\\
7&0,9,36,39,45,46,117,124,149,150,156,159,178,187,199,206\\
8&0,26,40,50,82,96,104,122,142,148,166,188,198,206,220,244\\
9&0,40,49,57,96,104,113,121,142,151,159,166,198,206,215,223\\
10&0,26,39,61,71,90,96,125,129,155,166,188,198,219,225,252\\
11&0,25,30,39,71,94,96,121,129,166,184,191,198,223,225,248\\
12&0,30,44,50,96,108,114,126,138,148,166,184,198,202,212,216\\
13&0,8,23,31,77,82,90,101,166,174,177,185,195,235,244,252\\
14&0,2,28,30,71,89,101,123,164,166,184,186,195,221,225,255\\
15&0,20,39,51,94,106,109,121,129,149,166,178,203,204,223,248\\
16&0,6,18,20,88,94,106,108,160,166,178,180,202,204,248,254\\
\bottomrule
\end{longtable}
\endgroup
The layer blocks are determined by the weights
in~\eqref{eq:f2-weights}: $Y_j$ is the consecutive integer-code interval
beginning at
$\sum_{i<j}a_i$ and ending at $\sum_{i\le j}a_i-1$.

\section{Coordinate data for the ternary example}\label{app:f3-data}

The base tile is
\[
 A_3=\{0,269,401,420,465,635,649,658,670\}\subset\F_3^6.
\]
The complements are:
\begingroup
\scriptsize
\catcode`\,=\active
\def,{\char44\allowbreak}
\setlength\LTleft{0pt}
\setlength\LTright{0pt}
\begin{longtable}{@{}c>{\raggedright\arraybackslash}p{0.93\textwidth}@{}}
\toprule
$j$&$C_j$ (ternary integer codes)\\
\midrule
\endhead
0&0,14,25,27,41,52,54,68,79,86,97,99,113,124,126,140,151,153,169,171,185,196,198,212,223,225,239,250,252,266,277,279,293,304,306,320,324,338,349,351,365,376,378,392,403,410,421,423,437,448,450,464,475,477,491,502,504,518,529,531,545,556,558,574,576,590,601,603,617,628,630,644,648,662,673,675,689,700,702,716,727\\
1&0,3,26,41,45,51,56,66,69,84,87,101,125,126,129,140,144,150,162,168,185,200,210,213,224,225,228,243,249,266,281,291,294,305,306,309,324,327,350,365,369,375,380,390,393,408,411,425,449,450,453,464,468,474,489,492,506,530,531,534,545,549,555,567,573,590,605,615,618,629,630,633,648,651,674,689,693,699,704,714,717\\
2&0,22,26,29,30,43,64,69,77,90,98,103,110,129,133,136,140,150,162,166,179,200,205,210,217,239,240,254,255,268,275,289,294,301,305,306,327,331,335,361,365,375,382,396,404,408,425,430,437,438,442,468,472,485,487,492,500,526,530,531,547,560,561,573,586,590,594,602,607,633,637,641,662,663,667,683,693,697,704,709,714\\
3&0,3,6,45,48,51,63,66,69,90,93,96,108,111,114,153,156,159,180,183,186,198,201,204,216,219,222,252,255,258,270,273,276,315,318,321,342,345,348,360,363,366,378,381,384,405,408,411,450,453,456,468,471,474,504,507,510,522,525,528,540,543,546,567,570,573,612,615,618,630,633,636,657,660,663,675,678,681,720,723,726\\
4&0,3,10,16,23,26,58,65,78,83,86,90,96,103,106,138,145,161,163,166,173,179,183,186,221,225,241,271,287,291,297,303,310,313,317,320,351,367,374,380,386,390,393,397,400,434,447,454,460,466,473,476,477,480,493,500,504,516,519,523,526,533,539,573,580,587,599,602,603,606,613,619,656,660,667,679,682,686,689,693,699\\
5&0,3,14,27,30,41,54,57,68,86,99,102,113,126,129,140,153,156,171,174,185,198,201,212,225,228,239,252,255,266,279,282,293,306,309,320,324,327,338,351,354,365,378,381,392,410,423,426,437,450,453,464,477,480,491,504,507,518,531,534,545,558,561,576,579,590,603,606,617,630,633,644,648,651,662,675,678,689,702,705,716\\
6&0,19,20,27,46,47,54,73,74,83,99,100,110,126,127,137,153,154,163,180,182,190,207,209,217,234,236,244,245,252,271,272,279,298,299,306,324,325,335,351,352,362,378,379,389,405,407,415,432,434,442,459,461,469,496,497,504,523,524,531,550,551,558,576,577,587,603,604,614,630,631,641,657,659,667,684,686,694,711,713,721\\
7&0,16,23,34,41,45,57,64,80,82,98,102,115,122,126,140,144,160,162,178,185,197,201,208,221,225,241,248,252,268,270,286,293,305,309,316,327,334,350,351,367,374,385,392,396,410,414,430,433,449,453,466,473,477,493,500,504,518,522,538,541,557,561,575,579,586,599,603,619,621,637,644,655,662,666,678,685,701,702,718,725\\
8&0,16,23,30,37,53,56,69,76,83,96,103,113,117,133,136,152,156,163,179,183,193,200,213,216,232,239,248,252,268,278,282,289,301,308,321,328,335,348,358,365,369,381,388,404,408,415,431,438,445,452,464,468,484,493,500,504,514,530,534,546,553,560,573,580,587,594,610,617,629,633,640,656,660,667,677,690,697,709,716,720\\
9&0,4,14,38,39,49,57,73,77,82,99,107,110,115,117,145,150,155,175,179,186,197,210,214,221,222,232,243,263,268,271,276,281,306,314,316,336,340,350,358,374,375,382,386,393,407,408,418,442,446,453,464,477,481,500,501,511,519,535,539,543,547,557,568,572,579,603,607,617,625,641,642,650,667,672,675,683,685,713,718,720\\
10&0,14,25,33,38,49,57,71,73,82,93,107,115,117,131,139,150,155,164,175,186,197,199,210,221,232,234,245,256,267,278,280,291,302,313,315,324,338,349,357,362,373,381,395,397,406,417,431,439,441,455,463,474,479,487,498,512,520,522,536,544,555,560,569,580,591,602,604,615,626,637,639,648,662,673,681,686,697,705,719,721\\
11&0,4,8,38,39,43,65,66,70,91,95,96,118,122,123,145,149,150,171,175,179,198,202,206,236,237,241,252,256,260,271,275,276,306,310,314,335,336,340,362,363,367,389,390,394,423,427,431,442,446,447,469,473,474,496,500,501,523,527,528,542,543,547,576,580,584,603,607,611,630,634,638,659,660,664,694,698,699,713,714,718\\
12&0,4,8,31,38,51,54,58,62,83,84,88,118,122,123,137,150,157,168,175,182,190,194,195,236,237,241,244,248,249,271,275,276,302,306,322,324,340,347,351,355,359,389,390,394,423,427,431,439,446,450,461,462,466,489,496,512,515,516,520,542,543,547,576,580,584,595,611,615,622,626,627,648,652,656,694,698,699,710,714,721\\
13&0,4,8,37,41,42,74,75,79,99,103,107,109,113,114,146,147,151,171,175,179,208,212,213,218,219,223,252,256,260,289,293,294,299,300,304,324,328,332,361,365,366,398,399,403,423,427,431,433,437,438,470,471,475,504,508,512,514,518,519,551,552,556,576,580,584,613,617,618,623,624,628,648,652,656,685,689,690,722,723,727\\
14&0,4,8,47,48,52,64,68,69,101,102,106,118,122,123,135,139,143,172,176,177,189,193,197,236,237,241,244,248,249,288,292,296,308,309,313,342,346,350,362,363,367,379,383,384,416,417,421,433,437,438,477,481,485,488,489,493,532,536,537,549,553,557,586,590,591,603,607,611,623,624,628,657,661,665,677,678,682,721,725,726\\
\bottomrule
\end{longtable}
\endgroup
The layer blocks are the consecutive integer-code intervals determined
by~\eqref{eq:f3-weights}; equivalently, $Y_j$ begins at
$\sum_{i<j}a_i$ and ends at $\sum_{i\le j}a_i-1$.

\end{document}